\newtheorem{theorem}{Theorem}
\newtheorem{lemma}{Lemma}
\newtheorem{definition}{Definition}
\newtheorem{remark}{Remark}
\newcommand{\iu}{\mathrm{i}\mkern1mu} % Imaginary number i 
\newcommand{\uc}{\boldsymbol{\mathrm{u}}} %commande/control u 
\def \C{\mathbb{C}}
\def \N{\mathbb{N}}
\def \R{\mathbb{R}}
\def \1{\mathds{1}}
\definecolor{darkblue}{RGB}{9,72,90}
\definecolor{lightblue}{RGB}{124,184,201}
\title{\LARGE \bf
Observed quantum particles system with graphon interaction
}
\author{Sofiane Chalal$^{1}$,  Nina H. Amini$^{1}$, Gaoyue Guo $^{2},$ and Hamed Amini $^{3}$ % stops a space
\thanks{This work is supported by the Agence Nationale de la Recherche projects Q-COAST ANR- 19-CE48-0003 and IGNITION ANR-21-CE47- 0015.}% <-this % stops a space
\thanks{$^{1}$ CNRS, L2S, CentraleSupélec, Université Paris-Saclay.
        {\tt\tiny sofiane.chalal@centralesupelec.fr; nina.amini@centralesupelec.fr}.}%
\thanks{$^{2}$ MICS, CentraleSupélec, Université Paris-Saclay.
        {\tt\tiny gaoyue.guo@centralesupelec.fr}.}%
\thanks{$^{3}$ Department of Industrial and Systems Engineering, University of Florida, Gainesville, FL, USA.
        {\tt\tiny aminil@ufl.edu}.}}
\begin{document}

\maketitle
\thispagestyle{empty}
\pagestyle{empty}

%%%%%%%%%%%%%%%%%%%%%%%%%%%%%%%%%%%%%%%%%%%%%%%%%%%%%%%%%%%%%%%%%%%%%%%%%%%%%%%%
\begin{abstract}
In this paper, we consider a system of heterogeneously interacting quantum particles subject to indirect continuous measurement. The interaction is assumed to be of the mean-field type. We derive a new limiting quantum graphon  system, prove the well-posedness of this system, and establish a stability result.
\end{abstract}

%%%%%%%%%%%%%%%%%%%%%%%%%%%%%%%%%%%%%%%%%%%%%%%%%%%%%%%%%%%%%%%%%%%%%%%%%%%%%%%%
\section{Introduction }
The mean-field approach has been widely used to deduce simplified models representing large number of interacting systems. However, the traditional mean-field limit may not be applicable to many networks. In such scenarios, a continuum model that incorporates the concept of a graphon could be utilized. This model captures situations where a large number of particles interact in diverse ways. It has been shown that for a sequence of dense graphs (representing interactions), as the number of nodes approaches infinity, the limiting equation can be described by a graphon \cite{lovasz12large}. Well-posedness, continuity and stability of such systems have been established in \cite{bayraktar23graphon}. A propagation of chaos is proved in  \cite{bayraktar23propagation} and the application of this result in the theory of graphon mean-field games has been studied, see also \cite{caines19graphon,caines21graphon,amini23graphon,lacker22label} regarding the later direction. 
\iffalse
\begin{itemize}
\item Graphon theory goes back to \cite{lovasz12large}
\item For Graphon interacting system\cite{bayraktar23graphon,bayraktar23propagation}
\item For Graphon Games  \cite{caines19graphon,caines21graphon,amini23graphon,lacker22label}
\item Use of graphon theory and networks interaction in quantum physics \cite{tindall22,searle24thermo} 
\end{itemize}
\fi
\iffalse
Initially the framework for filtering and control for quantum system was developed by  \cite{belavkin79optimal,belavkin83theory,belavkin87non}. A comprehensive study of quantum measurement and feedback control is presented in \cite{wiseman09quantum,boutenhandel07}
\begin{itemize}
\item For introduction ton quantum filtering control \cite{boutenhandel07}
\item For quantum MFG see \cite{kolokoltsov2021law,kolokoltsov2021qmfgcounting,kolokoltsov2022qmfg}
\item For applications of MF Belavkin equation \cite{chalal23mean}
\end{itemize}
\fi

The foundational principle of the law of large numbers for a system comprising interacting quantum particles was initially established by Spohn in \cite{spohn1980kinetic}, where the resulting limit equation is commonly known as the non-linear Schr\"odinger equation or the Hartree equation. Subsequently, a stochastic variant of this framework was introduced in a series of papers by Kolokoltsov \cite{kolokoltsov2021law,kolokoltsov2022qmfg,kolokoltsov2021qmfgcounting}, culminating in the derivation of a novel stochastic Lindblad equation for homogeneous interacting quantum particles. For imperfect measurements, we refer to \cite{chalal23mean} where an extension of the previous results is provided. Furthermore the application of the limit equation in quantum feedback control is proposed in \cite{chalal23mean}.

In contrast to classical mean-field  theory, the primary challenges arise due to the entanglements among quantum particles, negating the validity of employing the classical empirical measure. Additionally, the act of measurement itself exerts a consequential back-action on the system's dynamics. To address these issues, the aforementioned papers consider quantum Belavkin filtering \cite{belavkin79optimal,belavkin83theory,belavkin87non}, employing indirect measurement frameworks.

The aim of this paper is to initiate the mean-field study for quantum particles with heterogeneous interaction described by graphon. This type of interaction corresponds to physical situation considered in the literature, see e.g. \cite{tindall22,searle24thermo} in the context of physical many body systems. The main result of this paper is to establish the existence, uniqueness, and stability properties for the limiting quantum graphon particle system in both cases of homodyne and counting detection. %\textcolor{blue}{In particular the stability implies that under mean-field approximation, the density matrix for the $N$-quantum systems particles converges in suitable sense provided that the underlying graphon converges in the cut metric}\textcolor{purple}{Pas du tout}. 
Our future work \cite{amini25quantum} will further explore these concepts from a mathematical point of view. 

The paper is structured as follows. We first present the theory of graphons in Section \ref{sec:graphon}, followed by a brief description of the set-up for interacting quantum particles in Section \ref{sec:interacting}. Section \ref{sec:result} is devoted to the main results on quantum graphon particles. In Section \ref{sec:numerical}, we give a numerical example. Finally, we conclude in Section \ref{sec:conc}. 

\subsection{Notations}
Let $T > 0$ be a fixed time horizon. Given a Polish space $S$, let $\mathcal{C}([0,T],S)$ and $\mathcal{D}([0,T],S)$ denote the spaces of continuous and càdlàg functions from $[0,T]$ to $S$, respectively, equipped with the topology of uniform convergence. For any measurable subset $J \subset \R$, we denote by $\mathfrak{B}(J) $ the space of Borel subsets of  $J$. We fix  $\mathbb{H}$ as the Hilbert space, for $k \geq 1 $ $\mathbb{H}^{\otimes k} := \overbrace{\mathbb{H}\otimes\dots\otimes\mathbb{H}}^{k \text{ times}}$ is the Hilbert space of $k$ distinguishable quantum particles of same species.
Let $\mathcal{B}_1(\mathbb{H})$ be the set of linear bounded operator on $\mathbb{H}$. For every $ {O} \in \mathcal{B}_1(\mathbb{H})$, denote by ${{O}}^{\dag}$ its adjoint operator. 
For any   ${O}_{a},{O}_{b} \in \mathcal{B}_1(\mathbb{H})$, set $[{O}_{a},{O}_{b}] := {O}_{a}{O}_{b} - {O}_{b}{O}_{a}$  and $\{{O}_{a},{O}_{b}\} := {O}_{a}{O}_{b} + {O}_{b}{O}_{a}$. For any operator ${O} \in \mathcal{B}_1(\mathbb{H})$ and for $j \in \{1,\dots,N\}$ denote 
$\mathbf{O}_{j} := \mathbf{I}\otimes\dots\otimes{O}\otimes\dots\otimes\mathbf{I}$ the operator on $\mathcal{B}_{1}(\mathbb{H}^{\otimes N})$ that acts only on the $j$-th Hilbert space.
For any operator $B \in \mathcal{B}_{1}(\mathbb{H}\otimes\mathbb{H}) $ and for $j,k \in \{1,\dots,N \}$ denote $\mathbf{B}_{jk}$ the operator on $\mathcal{B}_{1}(\mathbb{H}^{\otimes N})$ that acts only on $j$-th and $k$-th Hilbert space.
We will use $C$ to denote various constants in the paper and $C(p)$ to emphasize the dependence on some parameter $p$. Their values may change from line to line.
For $O \in \mathcal{B}_{1}(\mathbb{H})$ we note $\|O\| := \|O\|_{F} = \sqrt{\tr(OO^{\dag})}$.
%and $\eta \in [0,1]$ denotes the efficiency  of measurement.

We consider a non-oriented graph $G_N = (V_N,E_N)$ of size $N$, where $V_N $ denotes the vertices labeled as follows: $ V_N = \{\frac{1}{N},\dots,\frac{N}{N}\}$, and $E_N$ represents the edges between vertices and can be weighted  between $0$ and $1$. 

The following matrices form the Pauli matrices :
\begin{align*}
{\sigma}_x &:= \begin{pmatrix}0 & 1 \\ 1 & 0\end{pmatrix}, \quad
{\sigma}_y := \begin{pmatrix}0 & -\iu \\ \iu & 0\end{pmatrix}, \quad
{\sigma}_z := \begin{pmatrix}1 & 0 \\ 0 & -1\end{pmatrix}.
\end{align*}

\section{Generalities about Graphons }
\label{sec:graphon}
We start by providing a brief description of graphon theory, which allows to define the limit of a sequence of dense graphs, see \cite{lovasz12large} for more details. 
%mathematical details on how Graphon can be viewed as  the limit object of a sequence of dense graphs, see \cite{lovasz12large} for more details. 

Let $I := [0,1]$. Denote by \(\mathcal{W}\) the space of all bounded symmetric measurable functions \(\mathrm{W} : I^2 \rightarrow \mathbb{R}\). The elements of $W \in \mathcal{W}$ will be called kernels. Let \(\mathcal W_0\) be the subset of $\mathcal{W}$ such that \(0 \leq \mathrm{W} \leq 1\), where elements of \(\mathcal{W}_0\) will be called graphons. 
%Sometimes we will also need to consider the set of all functions \(\mathrm{W} \in \mathcal{W}\) such that \(-1 \leq \mathrm{W} \leq 1\); this will be denoted by \(\mathcal{W}_1\).

Here we give an important notion in the
graphon theory which defines the operator associated to any kernel.
\begin{definition}[Kernel of graphon]
To a graphon $\mathrm{W}$ we associate the operator $ \mathcal{T}_{\mathrm{W}} : L^{\infty}(I) \to L^1(I)$ as follows for any $ \phi \in L^{\infty}(I)$
$$ \mathcal{T}_{\mathrm{W}}(\phi)(u) := \int_{0}^{1}\mathrm{W}(u,v)\phi(v)\mathrm{d}v.$$
\end{definition}
\begin{definition}[Norms]
Here we give two important norms which are used to quantify distance.
\begin{itemize}
\item Cut norm :
For any kernel $\mathrm{W} \in \mathcal{W}$, we define   the cut norm 
$$ \|\mathrm{W}\|_{\square} := \sup_{\mathrm{B}_1,\mathrm{B}_2 \in \mathfrak{B}(I)}\Big|\int_{\mathrm{B}_{1}\times\mathrm{B}_{2}}\mathrm{W}(u,v)\mathrm{d}u\mathrm{d}v\Big|.$$

This optima is attained, and it is equal to 
$$\sup_{\phi_1,\phi_2 : I \to I}\Big|\int_{I^2}\phi_{1}(u)\phi_{2}(v)\mathrm{W}(u,v)\mathrm{d}u\mathrm{d}v\Big|.$$
\item Operator norm : 
For an operator $\mathcal{T}_{\mathrm{W}}$ associated to kernel $\mathrm{W} \in \mathcal{W}$ we define the operator norm 
\begin{align*}
    \|\mathcal{T}_{\mathrm{W}}\|_{\text{op}} := \sup_{\|\phi\|_{\infty} \leq 1}\int_{I}\Big|\int_{I}\mathrm{W}(u,v)\phi(v)\mathrm{d}v\Big|\mathrm{d}u.
\end{align*}
\end{itemize}
\end{definition}
%\newline
\medskip
We have the following inequalities between different norms, for more details, see \cite[lemma 8.11]{lovasz12large}.
\begin{lemma}
For every kernel $\mathrm{W} \in \mathcal{W}$, we have 
$$\|\mathrm{W}\|_{\square} \leq \|\mathcal{T}_{\mathrm{W}}\|_{\text{op}} \leq 4\|\mathrm{W}\|_{\square}.$$\label{estimatecutop}
\end{lemma}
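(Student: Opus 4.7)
The plan is to prove the two inequalities separately by short duality arguments. The lower bound exploits that indicator functions are admissible test functions for the operator norm; the upper bound requires a slightly more careful sign/positive-part decomposition to reduce the operator norm to the cut-norm supremum over $[0,1]$-valued functions given in Definition~2.

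For the lower bound $\|\mathrm{W}\|_{\square} \leq \|\mathcal{T}_{\mathrm{W}}\|_{\text{op}}$, I would observe that for any $B_1, B_2 \in \mathfrak{B}(I)$,
\[
\int_{B_1 \times B_2} \mathrm{W}(u,v)\, du\, dv \;=\; \int_{I} \mathbf{1}_{B_1}(u)\, (\mathcal{T}_{\mathrm{W}} \mathbf{1}_{B_2})(u)\, du.
\]
Applying the trivial estimate $\big|\int \mathbf{1}_{B_1} f\big| \leq \int |f|$ bounds the absolute value by $\int_I |(\mathcal{T}_\mathrm{W}\mathbf{1}_{B_2})(u)|\,du$, which is at most $\|\mathcal{T}_\mathrm{W}\|_{\text{op}}$ since $\|\mathbf{1}_{B_2}\|_\infty = 1$. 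Taking the supremum over $B_1, B_2$ concludes.

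For the upper bound $\|\mathcal{T}_{\mathrm{W}}\|_{\text{op}} \leq 4\|\mathrm{W}\|_{\square}$, I would use a second duality step to linearize the outer absolute value. Given $\phi$ with $\|\phi\|_\infty \leq 1$, set $\psi(u) := \mathrm{sgn}\big((\mathcal{T}_{\mathrm{W}}\phi)(u)\big)$; then $\|\psi\|_\infty \le 1$ and
\[
\int_I |(\mathcal{T}_{\mathrm{W}}\phi)(u)|\, du \;=\; \int_{I^2} \psi(u)\phi(v)\, \mathrm{W}(u,v)\, du\, dv.
\]
The right-hand side is a bilinear form in $[-1,1]$-valued functions, whereas the alternative characterization of $\|\mathrm{W}\|_\square$ from Definition~2 only uses $[0,1]$-valued test functions. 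I would bridge this by decomposing $\psi = \psi^+ - \psi^-$ and $\phi = \phi^+ - \phi^-$ into positive and negative parts, each with values in $[0,1]$. Expanding the bilinear form yields four terms of the form $\pm\int_{I^2} \psi^{\varepsilon}(u)\phi^{\delta}(v)\mathrm{W}(u,v)\, du\, dv$, each bounded in absolute value by $\|\mathrm{W}\|_\square$. Summing gives the factor $4$, and taking the supremum over $\phi$ completes the argument.

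The main obstacle, such as it is, is precisely the source of the constant $4$: passing from the $L^\infty$-dual/$L^1$ formulation of the operator norm (which naturally produces $[-1,1]$-valued test functions) to the $[0,1]$-valued supremum in the cut norm costs one factor of $2$ per variable. The factor $4$ is therefore intrinsic to this approach. Throughout, I would take for granted the equivalence between the set-based and $[0,1]$-valued formulations of $\|\mathrm{W}\|_\square$ asserted in Definition~2.
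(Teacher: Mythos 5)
Your proof is correct: the paper does not prove this lemma itself but cites Lov\'asz (Lemma 8.11), and your two duality steps --- indicator test functions for the lower bound, and the sign function followed by the positive/negative-part decomposition producing four cross terms for the upper bound with constant $4$ --- reproduce exactly the standard argument given there. The only implicit points, namely that $\phi$ ranges over real-valued functions and that the set-based and $[0,1]$-valued formulations of $\|\mathrm{W}\|_{\square}$ coincide, are consistent with the paper's Definition 2, so nothing essential is missing.
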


One can establish a natural relationship between the adjacency matrix of a graph and graphon function.

\begin{definition}[Step Kernel]
Given a graph \(G_N = (V_{N},E_{N})\), one can define a step function \(\mathrm{W}^{G_N} : I^2 \rightarrow \mathbb{R}\) that associates with the adjacency matrix \((J_{pq}^N)_{p, q \in V_N}\). This association is constructed by partitioning the interval $I$ into $N$ equal subintervals \(I_p = \left(p-\frac{1}{N}, p\right]\) for \(p = \frac{1}{N}, \frac{2}{N}, \dots, \frac{N}{N}\), such that
\begin{align*}
    \mathrm{W}^{G_N}(u,v) = J_{pq}^N, \quad \text{for } (u,v) \in I_p \times I_q.
\end{align*}
\end{definition}

%\newline

The following  picture captures the essential idea of graphon as limiting object.
\newline
\newline
\begin{tikzpicture}[line width=0.38pt]
    % First Graph
    \begin{scope}[xshift=-0.5cm, yshift=-3.5, scale = 0.5]
        \foreach \i in {1,...,3}{
            \foreach \j in {\i,...,3}{
                \draw (\i,1) -- (\j,2);
            }
            \draw[fill=lightgray] (\i,2) circle [radius=1mm];
            \draw[fill=lightgray] (\i,1) circle [radius=1mm];
        }
    \end{scope}
    
    % First Graphon (directly below the first graph)
    \begin{scope}[xshift=-4.3cm, yshift=-2cm, scale = 0.63]
        \foreach \i in {0,...,5}{
            \draw (6+\i/2,0) -- (6+\i/2,3);
            \draw (6, \i/2) -- (9, \i/2);

            \ifnum \i<3
                \filldraw (9-\i/2,3) rectangle (8.5-\i/2,1.5+\i/2);
                \filldraw (6+\i/2,0) rectangle (6.5+\i/2,1.5-\i/2);
            \fi
        }
        \draw (6,0) rectangle (9,3);
    \end{scope}

    % Second Graph
    \begin{scope}[xshift=2cm,yshift=-3.5, scale = 0.5]
        \foreach \i in {1,...,5}{
            \foreach \j in {\i,...,5}{
                \draw (\i,1) -- (\j,2);
            }
            \draw[fill=lightgray] (\i,2) circle [radius=1mm];
            \draw[fill=lightgray] (\i,1) circle [radius=1mm];
        }
    \end{scope}
    % Second Graphon (directly below the second graph)
    \begin{scope}[xshift=-0.1cm, yshift=-2.0cm, scale = 0.38]
        \foreach \i in {0,...,9}{
            \draw (7+\i/2,0) -- (7+\i/2,5);
            \draw (7, \i/2) -- (12, \i/2);

            \ifnum \i<5
                \filldraw (12-\i/2,5) rectangle (11.5-\i/2,2.5+\i/2);
                \filldraw (7+\i/2,0) rectangle (7.5+\i/2,2.5-\i/2);
            \fi
        }
        \draw (7,0) rectangle (12,5);
    \end{scope}

    % Third Graphon
    \begin{scope}[xshift = 2.77cm, yshift = -1.7cm, scale = 0.26]
    
    \draw (10.3,-1) rectangle (17.5,6);

    \draw[fill = black] (17.5,6) -- (13.9,6) -- (17.5,2.5) -- cycle;

    % Ajout d'un rectangle dans le coin inférieur gauche
    \draw[fill = black] (10.3,-1) -- (13.9,-1) -- (10.3,2.5);
    %(10.3, -1), (10.3, 6), (17.5, 6), et (17.5, -1).
    \end{scope}
  % Add an arrow between the third and the last graphon
    %\draw[->, thick] (13.7,-1.5) -- (16.3,-1.5); 
    %\node at (15,-1.2) {\(N \to \infty\)};
\end{tikzpicture}
%\begin{center}
%\textbf{ } {\small Graph sequence converging to its limit.}
%\end{center}
%\vspace{10mm}

\section{Interacting quantum particles}
\label{sec:interacting}
Our analysis begins by defining a Borel set $\mathfrak{X}$ equipped with a Borel measure $\mu$. The pair $(\mathfrak{X},\mu)$ constitutes a Borel space, from which we construct a Hilbert space for one particle $\mathbb{H} := L^2(\mathfrak{X};\C)$.

Consider a system of $N$ particles placed on a graph $G_N = (V_N,E_N)$ of  pairwise interaction whose intensity is given by the weights of the edges. The state of the whole system is represented by self-adjoint positive normalized trace-class operators  namely : 

$$ \mathcal{S}(\mathbb{H}^{G_N}) := \Big\{ \rho \in \mathcal{B}_1(\mathbb{H}^{G_N});\; \rho \geq 0, \tr(\rho) = 1, \rho = \rho^{\dag} \Big\},$$ 
where $ \mathbb{H}^{G_N} := \underset{|V_N|}{\bigotimes}\mathbb{H} = L^2(\mathfrak{X}^{N};\C).$

In addition to the pairwise Hamiltonian, a free Hamiltonian is attached to each particle such that the total Hamiltonian of the system is described by the following formula:
\begin{align}
    \mathbf{H}^{} &= \frac{1}{N}\sum_{q \in V_N}\sum_{p > q} \xi^{N}_{pq}\mathbf{A}_{pq} + \sum_{q \in V_N}\mathbf{\tilde{H}}_{q}.\label{hamilt}
\end{align}
\iffalse
\begin{align}
    \mathbf{H}^{G_N} &= \frac{1}{N}\sum_{q \in V_N}\sum_{p > q} \xi^{N}_{pq}\mathbf{A}_{pq} + \sum_{q \in V_N}\mathbf{H}_{q}\label{hamilt}
\end{align}
\fi
\begin{itemize}
\item The operator $\mathbf{\tilde{H}}_q = \mathbf{\tilde{H}}_{q}^{\dag}$ is the free Hamiltonian associated to the particle $q$.

\item The operator $\mathbf{A}_{pq}$ represents the pairwise interaction between particles $q$ and $p$. The corresponding operator $A$ is symmetric, self-adjoint integral operator with Hilbert-Schmidt kernel $a$ such that:
\iffalse
\begin{align*}
\|a\|^{2}_{HS} &= \int_{\mathfrak{X}^4}|a(x,y;\tilde{x},\tilde{y})|^{2}\mathrm{d}x\mathrm{d}y\mathrm{d}\tilde{x}\mathrm{d}\tilde{y} < \infty\\
a(x,y;\tilde{x},\tilde{y})\! &= \!a(y,x;y',\tilde{x}),
a(x,y;\tilde{x},\tilde{y})\! = \!\overline{a(\tilde{x},\tilde{y};x,y)}
\end{align*}
\fi
\begin{align*}
&\|a\|^{2}_{HS} \!\!=\!\! \int_{\mathfrak{X}^4}\!\!|a(x,y;\tilde{x},\tilde{y})|^{2}\mu(\mathrm{d}x)\mu(\mathrm{d}y)\mu(\mathrm{d}\tilde{x})\mu(\mathrm{d}\tilde{y}) \!\!< \!\!\infty,\\
&a(x,y;\tilde{x},\tilde{y})\! = \!a(y,x;\tilde y,\tilde{x}),
a(x,y;\tilde{x},\tilde{y})\! = \!\overline{a(\tilde{x},\tilde{y};x,y)},
\end{align*}
\begin{align*} 
\quad &A : L^2(\mathfrak{X}^2;\C) \to
L^2(\mathfrak{X}^2;\C),\\
  Af(x,y) &:= \int_{\mathfrak{X}^2}a(x,y;\tilde{x},\tilde{y})f(\tilde{x},\tilde{y})\mu(\mathrm{d}\tilde{x})\mu(\mathrm{d}\tilde{y}).
\end{align*}

\item The coefficient $\xi_{pq}^{N}$, derived from the step kernel function $\mathrm{W}^{G_N}$, can be constructed in two ways:
\begin{itemize}
    \item $\xi_{pq}^{N} = \mathrm{W}^{G_N}(p,q),$
    \item $\xi_{pq}^{N} = \xi_{qp}^{N}= \text{Bernoulli}(\mathrm{W}^{G_N}(p,q))$ independently for $p,q \in V_N,$
\end{itemize}
where $\mathrm{W}^{G_N}$ converges in cut metric to graphon function $\mathrm{W}$.
\end{itemize}

From now on we will focus exclusively on the finite-dimensional case, i.e.,  $\mathfrak{X} = \{1,\dots,d\}$, equipped with a counting measure $\mu,$
$$\mathbb{H} := L^2((\mathfrak{X},\mu); \C) \equiv \C^{d}.$$
The space of density operators for a system of $N$-particles distributed on a graph $G_N = (V_N,E_n)$ is then described by the following set of density matrices

$$ \mathcal{S}(\mathbb{H}^{G_N}) := \Big\{\rho \in \mathcal{M}_{d^N}(\C); \rho \geq 0, \tr(\rho) = 1, \rho = \rho^{\dag} \Big\}.$$
%Now the density operator is just matrix, called density matrix.

Each particle is indirectly observed through an observable $L$. Consequently, the state of the system, conditioned upon the observation processes, is described by a stochastic differential equation with different driving noises depending on the type of detection measurement described below.

\paragraph{Homodyne detection}
The evolution of density matrix is described by matrix valued stochastic diffusive equation called Belavkin equation: 
\begin{align*}
\mathrm{d}\boldsymbol{\rho}_t^{N} &= -\iu{[\mathbf{H}^{},\boldsymbol{\rho}_{t}^{N}]}\mathrm{d}t
+ \sum_{q \in V_{N}} \Big({\bf L}_q\boldsymbol{\rho}_{t}^{N}{\bf L}^{\dag}_{q} - \frac{1}{2}\big\{{{\bf L}_q}{\bf L}^{\dag}_{q},\boldsymbol{\rho}_{t}^{N}\big\}\Big)\mathrm{d}t\\
& \!\!\!\!\!\!+\sqrt{\eta}\sum_{q \in V_{N}}\Big(\boldsymbol{\rho}_{t}^{N}{\bf L}^{\dag}_q+{\bf L}_q\boldsymbol{\rho}_{t}^{N} -\tr\big(({\bf L}_q + {\bf L}_q^{\dag})\boldsymbol{\rho}_{t}^{N}\big)\boldsymbol{\rho}_{t}^{N}\Big)\mathrm{d}W_t^{q},\\
\boldsymbol{\rho}_{0}^{N} &= \bigotimes_{{V_N}}\rho_{0}, \;\; \rho_{0} \in \mathcal{S}(\mathbb{H}).
\end{align*}
\iffalse
\begin{align*}
\mathrm{d}\boldsymbol{\rho}_t^{G_N} &= -\iu{[\mathbf{H}^{G_N},\boldsymbol{\rho}_{t}^{G_N}]}\mathrm{d}t
+ \sum_{q \in V_{N}} \Big({\bf L}_q\boldsymbol{\rho}_{t}^{G_N}{\bf L}^{\dag}_{q} - \frac{1}{2}\big\{{{\bf L}_q}{\bf L}^{\dag}_{q},\boldsymbol{\rho}_{t}^{G_N}\big\}\Big)\mathrm{d}t\\
&+\sqrt{\eta}\sum_{q \in V_{N}}\Big(\boldsymbol{\rho}_{t}^{G_N}{\bf L}^{\dag}_q+{\bf L}_q\boldsymbol{\rho}_{t}^{G_N} -tr\big(({\bf L}_q + {\bf L}_q^{\dag})\boldsymbol{\rho}_{t}^{G_N}\big)\boldsymbol{\rho}_{t}^{G_N}\Big)\mathrm{d}W_t^{q}\\
\boldsymbol{\rho}_{0}^{G_N} &= \bigotimes_{{V_N}}\rho_{0}, \;\; \rho_{0} \in \mathcal{S}(\mathbb{H}).
\end{align*}
\fi
The $\eta \in (0,1]$ is the measurement efficiency, and the corresponding observation process for particle $q \in V_N$ is given by the following stochastic process
\begin{align*}
\mathrm{d}Y_t^{q} &= \mathrm{d}W_t^{q} + \sqrt{\eta}\tr\big( ({\bf L}_q + {\bf L}_q^{\dag})\boldsymbol{\rho}_{t}^{N}\big)\mathrm{d}t, \quad q \in V_N.
\end{align*}
\iffalse
\begin{align*}
\mathrm{d}Y_t^{q} &= \mathrm{d}W_t^{q} + \sqrt{\eta}tr\big( ({\bf L}_q + {\bf L}_q^{\dag})\boldsymbol{\rho}_{t}^{G_N}\big)\mathrm{d}t, \quad q \in V_N
\end{align*}
\fi
\paragraph{Photon counting}
The evolution of the density matrix is described by the matrix valued stochastic jump equation called Jump Belavkin equation: 
\begin{align*}
\mathrm{d}&\boldsymbol{\rho}_t^{N} \!=\! -\mathrm{i}[\mathbf{H}^{},\boldsymbol{\rho}_{t-}^{N}]\mathrm{d}t
+\!\! \sum_{q \in V_{N}} \!\!\left({\bf L}_q\boldsymbol{\rho}_{t-}^{N}{\bf L}^{\dag}_{q}\! -\! \frac{1}{2}\left\{{{\bf L}_q}{\bf L}^{\dag}_{q},\boldsymbol{\rho}_{t-}^{N}\right\}\right)\!\!\mathrm{d}t\\
& \!+\!\sum_{q \in V_{N}}\!\!\left(\frac{{{\bf L}_q}\boldsymbol{\rho}_{t-}^{N}{\bf L}_q^{\dag}}{\tr\left({{\bf L}_q}\boldsymbol{\rho}_{t-}^{N}{\bf L}^{\dag}_{q}\right)}\!\! -\!\! \boldsymbol{\rho}_{t-}^{N}\!\!\right)\!\!\Bigg(\mathrm{d}N_t^{q}\! -\! \tr\left({{\bf L}_q}\boldsymbol{\rho}_{t-}^{N}{\bf L}^{\dag}_{q}\right)\mathrm{d}t \!\Bigg),\\
&\boldsymbol{\rho}_{0}^{N} = \bigotimes_{V_N}\rho_{0}, \; \rho_{0} \in \mathcal{S}(\mathbb{H}). 
\end{align*}

The (counting) observed Poisson processes $(N^{q}_t)_{t \geq 0}$ have stochastic intensities $\int_{0}^{t}\tr(\mathbf{L}_{q}^{\dag}\mathbf{L}_{q}\boldsymbol{\rho}^{N}_s)\mathrm{d}s$, so that the compensated processes $(N^{q}_t - \int_{0}^{t}
\tr(\mathbf{L}_{q}^{\dag}\mathbf{L}_{q}\boldsymbol{\rho}^{N}_s)\mathrm{d}s)_{t \geq 0}$ are martingales. 
\iffalse
\begin{remark}The random variables $N^{q}_t=\int_{0}^{t}\mathrm{d}N^{q}_s$ express the numbers of jumps have occurred up to time $t$, i.e. how many photons have been detected. Nevertheless, a "measurement record" corresponding to an experiment contains more information than just $N^{q}_t$, and the complete information is a table of times $0 = t_0 \leq t_1, \leq \dots \leq t_{N^{q}_t} =  t$ corresponding to each photo-detection.\end{remark}
\fi
It is worth noting that the driving noises $N^q$ depend on the unknown $\boldsymbol{\rho}_{t}^{N}$, while with a reformulation of $\mathrm{d}N_t^{q}\! -\! \tr\left({{\bf L}_q}\boldsymbol{\rho}_{t-}^{N}{\bf L}^{\dag}_{q}\right)\mathrm{d}t$ by means of Poisson measures, it becomes a standard SDE and the wellposedness is established. Nevertheless, identifying the corresponding mean-field limit and deriving the propagation of chaos are not trivial. For this technical reason, 
we consider the case where the operator $L$ is unitary (i.e. $L^{\dag} = L^{-1}$), such that the jump intensity is constant equal to one, and the equation becomes linear 
\begin{align*}
\mathrm{d}\boldsymbol{\rho}_t^{N} &= -\mathrm{i}[\mathbf{H}^{},\boldsymbol{\rho}_{t-}^{N}]\mathrm{d}t
+ \sum_{q \in V_{N}}\left({{\bf L}_q}\boldsymbol{\rho}_{t-}^{N}{\bf L}_q^{\dag} - \boldsymbol{\rho}_{t-}^{N}\right)\mathrm{d}N_t^{q}.
\end{align*}
An important notion in $N$-quantum body system  is the partial trace and can be viewed as quantum analogue of marginals for probability laws. 

\begin{definition}[Partial trace]
Set $V_k = \{\frac{1}{N},\dots,\frac{k}{N}\}$ for an integral operator $\boldsymbol{\rho}$ with integral kernel $\boldsymbol{\rho}(x;x') $ in $L^{2}(\mathfrak{X}^{N}),$ the partial trace over the set $V_N\setminus V_k$ is the operator $\rho^{V_k}$ denoted as follows
\begin{align*}
    \rho^{V_k} := \tr_{V_N\setminus V_k}(\boldsymbol{\rho}),
\end{align*}
where
\begin{align*}
    &\tr_{V_N\setminus V_k}(\boldsymbol{\rho})(x_{V_k};x'_{V_k}) \\ &= \int_{\mathfrak{X}^{V_N\setminus V_k}}\boldsymbol{\rho}\Big(x_{V_k},y_{V_N\setminus V_k};x'_{V_k},y_{V_N\setminus V_k}\Big)\mu(\mathrm{d}y_{V_N\setminus V_k}),
\end{align*}
with $x = (x_1,\dots,x_N), x_{V_k} = (x_1,\dots,x_k), x_{V_N\setminus V_k} = (x_{k+1},\dots,x_N).$

Similarly we can define the partial trace $\rho^{V} = \tr_{V_N\setminus V}(\boldsymbol{\rho})$ with respect to all variables outside the set $V \subset V_N.$
\end{definition}

\begin{remark}
The primary interest in models of open quantum systems subjected to indirect measurements is that they allow for the consideration of control situations through feedback (Markovian feedback in the terminology of MFG), thus the system's Hamiltonian can be extended into a time-dependent Hamiltonian of the following form:
\begin{align}
    \mathbf{H}^{}_{t} &= \frac{1}{N}\sum_{q \in V_N}\sum_{p > q} \xi^{N}_{pq}\mathbf{A}_{pq}\! + \! \sum_{q \in V_N}\Big(\mathbf{\tilde{H}}_{q} + \uc(\rho_t^{q})\hat{\mathbf{H}}_{q}\Big),\label{hamcont}
\end{align}
where:
\begin{itemize}
    \item The operator $\hat{\mathbf{H}}_{q} = \hat{\mathbf{H}}^{\dag}_{q}$  is the controlled Hamiltonian associated to particle $q.$
    \item Recall that $\rho^{q}$ represents the state of the particle $q$, which can be obtained by taking a partial trace over
the other particles i.e. $\rho^{q} := \rho^{\{q\}}.$
\item The scalar control $\uc \in \mathcal{U} := \big\{f \; \text{mesurable} \; |\; f : \mathcal{S}(\mathbb{H}) \to [-U, U] \big\}.$ 
\end{itemize}

All  results stated in this paper can be easily extended for Hamiltonian of the form \eqref{hamcont}, with a Lipschitzian control $\uc.$  
\end{remark}

\section{Mean-field and graphon models }
\label{sec:result}
\subsection{Propagation of chaos}
%In order to reduce the complexity representation of $N$-interacting quantum particles, a natural way is to seek a simplification when $N$ becomes large. In this way we can ignor the impact of single particle, instead we can consider the impact of averaging which coms from the law of large number. 
%To simplify the representation of $N$-interacting quantum particles, a practical approach is to focus on simplification as N grows larger. This involves disregarding the influence of individual particles and instead analyzing the collective effects, leveraging the principle of averaging dictated by the law of large numbers.
In order to reduce the complexity of representing $N$-interacting quantum particles, a natural approach is to seek simplification as $N$ becomes large. By doing so, we can disregard the impact of individual particles and instead focus on the effects of averaging, which arise from the law of large numbers.

The justification for the mean-field model goes back to the theory of propagation of chaos, initiated by Kac in \cite{kac56}, see \cite{chaintron22I,chaintron22II} for a recent and complete review. For the quantum case, this concept was initially formalized by Spohn in \cite{spohn1980kinetic}, see \cite{gottlieb03} for notion of quantum molecular chaos. The resulting limit equation is known as the non-linear Schr\"odinger equation or Hartree equation. Subsequently, Kolokoltsov extends these results by deriving the mean-field Belavkin  equations in a series of papers \cite{kolokoltsov2021law,kolokoltsov2022qmfg,kolokoltsov2021qmfgcounting}.

In the following, we assume the propagation of chaos. In essence, this implies that particles remain decoupled if they begin in a decoupled state, which can be summarized by the following intuitive idea : 
\begin{align*}
    \boldsymbol{\rho}_{0}^{N} = \rho_{0}^{\otimes N} \xrightarrow[]{\text{Propagation of chaos}} \boldsymbol{\rho}_{t}^{N}  \approx \bigotimes_{q \in V_N}\gamma_{t}^{q}. 
\end{align*}
As the graph is dense and converges with respect to cut metric to a graphon, we get
\begin{align*}
    \boldsymbol{\rho}_{t}^{N} \xrightarrow[N \to \infty]{} \bigotimes_{u \in I}\gamma_{t}^{u}.
\end{align*}
In this way, in the following subsection, we introduce the limit system and study its properties. Specifically, we investigate its well-posedness and stability characteristics. The rigorous derivation of the mean-field graphon systems will be presented in \cite{amini25quantum}.

%$$\boldsymbol{\rho} \approx \bigotimes_{q \in V_N} \gamma^{q} \xrightarrow[N \to \infty]{} \bigotimes_{u \in I} \gamma^{u}
 %$$
%The program is as follow : Prove that some limit point  exists in a suitable topology. After that  identify the limit as the solution of a  problem. And finally prove that the limit problem is wellposed so that the limit is uniquely defined.
%In this paper we treat only on the last part, the rigourous derivation of the graphon mean-field system will be performed in a subsequent paper \cite{amini25quantum}.
\subsection{Quantum graphon system}
Relating the observations given by $ L$ to the evolution of particle $u$, we have the following limit equations for the individual particles:

\begin{itemize}

\item Homodyne detection:
\begin{align}
\mathrm{d}\gamma_t^{u} &= -\iu\Big[\tilde{H} + \int_{0}^{1}\mathrm{W}(u,v)A^{\mathbb{E}[\gamma_t^{v}]}\mathrm{d}v, \gamma_t^{u} \Big]\mathrm{d}t\nonumber\\
&+ \Big(L\gamma_t^{u}L^{\dag} - \frac{1}{2}\{LL^{\dag}, \gamma_t^{u}\}\Big)\mathrm{d}t\nonumber\\
& + \sqrt{\eta}\Big(L\gamma_t^{u} + \gamma_t^{u}L^{\dag} - \tr\big((L+L^{\dag})\gamma_t^{u}\big)\gamma_t^{u}\Big)\mathrm{d}W_t^{u}.\label{graphonbelvakinhomodyne}
\end{align}

\item Photon counting:
\begin{align}
\mathrm{d}\gamma_t^{u} &= -\iu\Big[\tilde{H} + \int_{0}^{1}\mathrm{W}(u,v)A^{\mathbb{E}[\gamma_{t-}^{v}]}\mathrm{d}v, \gamma_{t-}^{u} \Big]\mathrm{d}t\nonumber\\
&+\quad \Big(L\gamma_{t-}^{u}L^{\dag} - 
\gamma_{t-}^{u}\Big)\mathrm{d}N_t^{u},\label{graphonbelvakinjump}
\end{align}
\end{itemize}

where $\gamma_{0}^{u} = \rho_{0}$
and 
\begin{align*}A^{\rho}(x,x') &:= \int_{\mathfrak{X}^2}a(x,y;x',y')\overline{\rho(y,y')}\mu(\mathrm{d}y)\mu(\mathrm{d}y'),\\ &\; \forall (x,x') \in \mathfrak{X}^{2}, \;\; \forall \rho \in \mathcal{S}(\mathbb{H}).
\end{align*}

The observation process for the particle $u \in I$ will be:
\begin{itemize}
\item Homodyne detection: $$\mathrm{d}Y_t^{u} = \mathrm{d}W_t^{u} \!+\! \sqrt{\eta}\tr\big((L+L^{\dag})\gamma_t^{u}\big)\mathrm{d}t;$$

\item Photon Counting: $(N_{t}^{u})_{ t \geq 0}$.
\end{itemize}

\begin{remark}
In both types of detection, if we take expectation over trajectories $m_t := \mathbb{E}[\gamma_{t}]$, we get a new version of Lindblad equations:
\begin{align*}
\mathrm{d}m_t^{u} &= -\iu\Big[\tilde{H} + \int_{0}^{1}\mathrm{W}(u,v)A^{m_t^{v}}\mathrm{d}v, m_t^{u} \Big]\mathrm{d}t\\
&\qquad\qquad\qquad+ \Big(Lm_t^{u}L^{\dag} - \frac{1}{2}\{LL^{\dag}, m_t^{u}\}\Big)\mathrm{d}t.
\end{align*}
\end{remark}
\medskip

Now we can state the main results of this paper.

\medskip

\begin{theorem}[Existence and Uniqueness : Diffusive]
Let $T > 0 $. Then the system \eqref{graphonbelvakinhomodyne} is well posed, and for all $u \in I$, $\gamma^{u}$ takes values in the space $\mathcal{S}(\mathbb{H}).$
\end{theorem}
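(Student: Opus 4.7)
The plan is to follow a standard McKean-Vlasov strategy, decoupled into an inner single-particle well-posedness step for a frozen mean field and an outer Banach fixed-point argument on the mean-field map. First I would fix a measurable family $m = (m_t^v)_{v \in I,\, t \in [0,T]}$ of candidate first moments taking values in $\mathcal{S}(\mathbb{H})$, which is compact since $\mathbb{H}$ is finite dimensional. For such a frozen $m$, the nonlocal term
\begin{align*}
B_t^u(m) := \int_0^1 \mathrm{W}(u,v)\, A^{m_t^v}\, \mathrm{d}v
\end{align*}
is a bounded, measurable, self-adjoint operator-valued function of $(t,u)$: boundedness uses that $0 \le \mathrm{W} \le 1$ and that $\|A^{m_t^v}\|$ is controlled by $\|a\|_{HS}$ uniformly in $v$, thanks to $\tr(m_t^v)=1$. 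With $B_t^u(m)$ absorbed as an additional time-dependent deterministic Hamiltonian, the $u$-indexed equation reduces to a classical diffusive Belavkin SDE driven by $W^u$; its strong well-posedness follows from standard arguments because, restricted to the bounded set $\mathcal{S}(\mathbb{H})$, the drift and diffusion are Lipschitz with a locally bounded trace-cubic nonlinearity.

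Next I would verify that $\gamma_t^u$ remains in $\mathcal{S}(\mathbb{H})$ almost surely. Self-adjointness follows because both drift and diffusion preserve self-adjointness, so $\gamma_t^u-(\gamma_t^u)^\dagger$ satisfies a linear SDE starting at $0$. Trace preservation is obtained by applying $\tr$ to the SDE: the Hamiltonian and Lindbladian drift terms vanish by cyclicity, and the stochastic term reduces to $\sqrt{\eta}\,\tr((L+L^\dagger)\gamma_t^u)(1-\tr(\gamma_t^u))\,\mathrm{d}W_t^u$, which is compatible with $\tr(\gamma_t^u)\equiv 1$ by uniqueness of linear SDEs. Positivity is the most delicate property; the cleanest route is the standard stochastic-Schr\"odinger unravelling $\gamma_t^u = \psi_t^u(\psi_t^u)^\dagger/\|\psi_t^u\|^2$ where $\psi^u$ solves an associated Hilbert-space SDE, making $\gamma_t^u \succeq 0$ by construction.

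With the inner problem solved, I would set up the mean-field map on
\begin{align*}
\mathcal{M}_T := \Big\{ m : [0,T] \times I \to \mathcal{S}(\mathbb{H}) \text{ measurable, continuous in } t \Big\},
\end{align*}
by $\Phi(m)_t^u := \mathbb{E}[\gamma_t^u(m)]$, and prove contraction of a sufficiently high iterate $\Phi^n$ for the norm $\sup_{u,t}\|\cdot\|$. The core estimate is a Gr\"onwall bound of the shape
\begin{align*}
\sup_u \mathbb{E}\big\|\gamma_t^u(m) - \gamma_t^u(\tilde m)\big\|^2 \le C\int_0^t \sup_v \|m_s^v - \tilde m_s^v\|^2\, \mathrm{d}s,
\end{align*}
obtained by It\^o's formula together with the uniform Lipschitz bound $\|B_t^u(m)-B_t^u(\tilde m)\| \le C\|a\|_{HS}\sup_v\|m_t^v-\tilde m_t^v\|$, which rests on $0 \le \mathrm{W} \le 1$ and on $\gamma^u \in \mathcal{S}(\mathbb{H})$. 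Iterating this inequality yields $\|\Phi^n(m)-\Phi^n(\tilde m)\| \le \frac{(CT)^n}{n!}\|m-\tilde m\|$, hence a unique fixed point, which is the desired strong solution.

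The main obstacle I anticipate is joint measurability in $u$: unlike the scalar McKean-Vlasov setting, one must carry a measurable family $(\gamma^u)_{u\in I}$ on a single probability space supporting an uncountable family of independent Brownians $(W^u)_{u\in I}$, which is not separable. I would sidestep this by observing that only the deterministic quantities $m_t^u=\mathbb{E}[\gamma_t^u]$ enter the interaction, and by building the Picard iterates $m^{(n)}$ as explicitly measurable functions of $(t,u)$; the stability estimate above propagates measurability to the limit, so that the integral against $\mathrm{W}(u,\cdot)$ is well defined for a.e.\ $u$ without ever having to construct a measurable realization of the full coupled process.
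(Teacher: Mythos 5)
Your proposal follows essentially the same route as the paper: freeze the family of first moments, solve the resulting parameterized Belavkin SDE with the graphon term absorbed into a time-dependent Hamiltonian, and then run a fixed-point argument on the map $\xi \mapsto \big(\mathbb{E}[\gamma_t^{\xi^u}]\big)_{u,t}$, showing a sufficiently high iterate is a contraction for the $\sup_{u,t}$ norm. The additional details you supply (preservation of $\mathcal{S}(\mathbb{H})$ via trace/self-adjointness/unravelling, and measurability in $u$ through the deterministic moment family) are exactly the points the paper defers to its companion work, and they are handled correctly.
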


\begin{proof} The proof can be tackled using same techniques as in \cite{chalal23mean}.
Set $\mathcal{C}_{I} := \mathcal{C}\big([0,T],\mathcal{S}(\mathbb{H})^{I}\big)$ equipped with the following norm $\|\xi\|_{t} := \sup_{r \in [0,t]}\sup_{u \in I}\|\xi^{u}_{r}\|$. For each $\xi \in \mathcal{C}_{I}$,  we consider the following system SDE:
\begin{align*}
    \mathrm{d}\gamma_t^{\xi^{u}}&= -\iu\Big[\tilde{H}+\int_{0}^{1}\!\mathrm{W}(u,v)A^{\xi^{v}}\mathrm{d}v\!,\!\gamma_t^{\xi_{u}} \Big]\mathrm{d}t\\ &+ \Big(L\gamma_t^{\xi^{u}}L^{\dag}-\frac{1}{2}\{LL^{\dag},\gamma_t^{\xi^{u}}\}\Big)\mathrm{d}t\nonumber\\
& + \sqrt{\eta}\Big(L\gamma_t^{\xi^{u}} + \gamma_t^{\xi^{u}}L^{\dag} - \tr\big((L+L^{\dag})\gamma_t^{\xi^{u}}\big)\gamma_t^{\xi^{u}}\Big)\mathrm{d}W_t^{u}.
\end{align*}

\iffalse
\begin{align*}
    \mathrm{d}\gamma_t^{\xi^{u}}\!&=\! -\iu\Big[\tilde{H}\!+\! \int_{0}^{1}\!\mathrm{W}(u,v)A^{\xi^{v}}\mathrm{d}v\!,\!\gamma_t^{\xi_{u}} \Big]\mathrm{d}t \!+\! \Big(L\gamma_t^{\xi^{u}}L^{\dag}\!-\! \frac{1}{2}\{LL^{\dag}\!,\! \gamma_t^{\xi^{u}}\}\Big)\mathrm{d}t\nonumber\\
&\quad + \sqrt{\eta}\Big(L\gamma_t^{\xi^{u}} + \gamma_t^{\xi^{u}}L^{\dag} - tr\big((L+L^{\dag})\gamma_t^{\xi^{u}}\big)\gamma_t^{\xi^{u}}\Big)\mathrm{d}W_t^{u}\label{belavkingraphonpara}
\end{align*}
\fi

From the parameterize system equations, define the mapping $$\Xi : \mathcal{C}_{I} \to
\mathcal{C}_{I} \; \text{by} \;\; 
\Xi(\xi) := \big((\mathbb{E}[\gamma_t^{\xi^{u}}])_{0 \leq t \leq T}\big)_{u \in I}.$$ 

The process $\gamma^{m_u}$ corresponds to the solution of \eqref{graphonbelvakinhomodyne} if and only if $m_u = \Xi_{u}(m)$. This is done by showing that there exists $k \in \N$ such that  $\Xi^{(k)}$ is contraction, namely for any $\tilde{\xi}, \hat{\xi} \in \mathcal{C}_{I}$
$$ \|\Xi^{(k)}(\tilde{\xi}) - \Xi^{(k)}(\hat{\xi}) \|_{T} < \|\tilde{\xi} - \hat{\xi}\|_{T},$$
see the complete proof in \cite{amini25quantum}.
\end{proof}

\begin{theorem}[Existence and Uniqueness : Jump]
Let $T > 0 $. Then the system \eqref{graphonbelvakinjump} is well posed,  and for all $u \in I$, $\gamma^{u}$ is valued in $\mathcal{S}(\mathbb{H}).$
\end{theorem}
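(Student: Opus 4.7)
The plan is to adapt the fixed-point scheme used in the diffusive case, exploiting the crucial simplification brought by unitarity of $L$: since $\tr(L\gamma L^{\dag}) = \tr(\gamma) = 1$, the jump intensity is identically one and the driving processes $(N^u)_{u \in I}$ can be taken as standard Poisson processes of rate one (after rewriting the compensated terms via Poisson random measures), independent of the unknown $\gamma$. Using the notation $\mathcal{C}_I$ and $\|\cdot\|_T$ from the diffusive proof, for each $\xi \in \mathcal{C}_I$ I consider the parameterized jump SDE
\begin{align*}
\mathrm{d}\gamma_t^{\xi^u} &= -\iu\Big[\tilde{H} + \int_{0}^{1}\mathrm{W}(u,v) A^{\xi^v_t}\mathrm{d}v,\, \gamma_{t-}^{\xi^u}\Big]\mathrm{d}t\\
&\quad + \big(L\gamma_{t-}^{\xi^u}L^{\dag} - \gamma_{t-}^{\xi^u}\big)\mathrm{d}N_t^u,
\end{align*}
with $\gamma_0^{\xi^u} = \rho_0$. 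The coefficients are linear in $\gamma$ and the Hamiltonian $H_t^u(\xi) := \tilde{H} + \int_0^1 \mathrm{W}(u,v) A^{\xi^v_t}\mathrm{d}v$ is bounded and deterministic for fixed $\xi$, so the parameterized equation can be constructed pathwise: between jumps $\gamma^{\xi^u}$ evolves via the unitary flow generated by $H^u(\xi)$, and at each jump time one applies $\gamma \mapsto L\gamma L^{\dag}$. Both operations preserve positivity, self-adjointness and trace, giving $\gamma_t^{\xi^u} \in \mathcal{S}(\mathbb{H})$ for all $(t,u,\omega)$.

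Next, I define $\Xi : \mathcal{C}_I \to \mathcal{C}_I$ by $\Xi(\xi)^u_t := \mathbb{E}[\gamma_t^{\xi^u}]$ and observe that a solution of \eqref{graphonbelvakinjump} corresponds exactly to a fixed point of $\Xi$. Since $H^u(\xi)$ is deterministic once $\xi$ is fixed and $N_t^u - t$ is a martingale, taking expectations in the parameterized SDE produces a linear Lindblad-type ODE for $m_t^{\xi^u} := \Xi(\xi)^u_t$, so $\Xi$ is well-defined with continuous-in-$t$ image.

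To close the argument I show that some iterate of $\Xi$ is a strict contraction. Comparing $m^{\xi^u}$ and $m^{\tilde{\xi}^u}$ through their ODEs and using the Lipschitz bound $\|A^{\rho} - A^{\tilde{\rho}}\| \leq \|a\|_{HS}\|\rho - \tilde{\rho}\|$ together with $\|\mathrm{W}\|_\infty \leq 1$, $\|L\| = 1$, and $\|m^{\xi^u}_t\| \leq 1$, a Gronwall-type argument yields
$$\|\Xi(\xi) - \Xi(\tilde{\xi})\|_t \;\leq\; C \int_0^t \|\xi - \tilde{\xi}\|_s \,\mathrm{d}s,$$
and iterating gives $\|\Xi^{(k)}(\xi) - \Xi^{(k)}(\tilde{\xi})\|_T \leq \frac{(CT)^k}{k!}\|\xi - \tilde{\xi}\|_T$, a strict contraction for $k$ large enough. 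Banach's fixed-point theorem then supplies a unique $m \in \mathcal{C}_I$ with $\Xi(m) = m$, and the associated $\gamma^m$ is the unique $\mathcal{S}(\mathbb{H})$-valued solution of \eqref{graphonbelvakinjump}.

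The hardest part, as already signalled in the paragraph preceding the theorem, is to handle the jump noise cleanly: one must rewrite $\mathrm{d}N_t^u - \mathrm{d}t$ in terms of Poisson random measures to obtain a genuine SDE formulation, and then ensure joint measurability in $(u,\omega,t)$ of the family $\gamma^{\xi^u}$ so that the graphon integral $\int_0^1 \mathrm{W}(u,v) A^{\mathbb{E}[\gamma_t^v]} \mathrm{d}v$ makes sense, for instance as a Bochner integral valued in $\mathcal{B}_1(\mathbb{H}\otimes\mathbb{H})$. The detailed execution of these technical measurability points is deferred to \cite{amini25quantum}.
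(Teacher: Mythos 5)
Your proposal is correct and takes essentially the same route as the paper: the paper's proof of the jump theorem simply invokes the parameterized fixed-point scheme of the diffusive case, i.e.\ freezing the mean field $\xi$, defining $\Xi(\xi)^u_t := \mathbb{E}[\gamma_t^{\xi^u}]$, and showing an iterate $\Xi^{(k)}$ is a contraction, with full details deferred to the companion work. Your added observations (unitarity of $L$ giving unit jump intensity, the pathwise unitary-flow-plus-conjugation construction preserving $\mathcal{S}(\mathbb{H})$, and the factorial contraction bound) merely make explicit the steps the paper leaves implicit.
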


\begin{proof}
The proof is similar to the previous one. The details will be provided in \cite{amini25quantum}.
\end{proof}

Now we prove the following stability result which is important to measure the distance between two state processes induced by different graphons.

\begin{theorem}[Stability : Diffusion]
Let  $(\tilde{\gamma}_t)_{t \geq 0} $ and $(\hat{\gamma}_{t})_{t \geq 0}$ be the solutions of the system \eqref{graphonbelvakinhomodyne} associated with graphon $\tilde{\mathrm{W}}$ and $\hat{\mathrm{W}}$, with the same initial conditions, then there exists  some constant $C := C(T,\eta, \|\tilde{H}\|,\|A\|,\|L\|)$ such that 
$$ \mathbb{E}\Big[\int_{I}\sup_{t \in [0,T]}\Big\|\tilde{\gamma}_{t}^{u} -\hat{\gamma}_{t}^{u}\Big\|^{2}\mathrm{d}u\Big] \leq C\|\tilde{\mathrm{W}} - \hat{\mathrm{W}}\|_{\square}. $$
\end{theorem}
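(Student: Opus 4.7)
The plan is to derive a Gronwall-type inequality for the quantity $\Phi(t) := \mathbb{E}\bigl[\int_I \sup_{s\in[0,t]}\|\delta_s^u\|^2 \mathrm{d}u\bigr]$, where $\delta_t^u := \tilde{\gamma}_t^u - \hat{\gamma}_t^u$, in which the graphon discrepancy enters only as a forcing term bounded by $\|\tilde{\mathrm{W}}-\hat{\mathrm{W}}\|_{\square}$. We drive both SDEs by the same Brownian motion $W^u$, write $\mathrm{d}\delta_t^u = b_t^u \mathrm{d}t + \sigma_t^u \mathrm{d}W_t^u$, apply Itô's formula to $\|\delta_t^u\|^2 = \tr((\delta_t^u)^2)$, take $\sup_{s\le t}$ and expectation, use the BDG inequality on the martingale part, integrate over $u\in I$, and close by Gronwall.

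The crux lies in treating the nonlinear interaction term. We split
\begin{align*}
&\int_0^1 \tilde{\mathrm{W}}(u,v) A^{\mathbb{E}[\tilde{\gamma}_t^v]} \mathrm{d}v - \int_0^1 \hat{\mathrm{W}}(u,v) A^{\mathbb{E}[\hat{\gamma}_t^v]} \mathrm{d}v \\
&= \underbrace{\int_0^1 (\tilde{\mathrm{W}}-\hat{\mathrm{W}})(u,v) A^{\mathbb{E}[\tilde{\gamma}_t^v]}\mathrm{d}v}_{=:\Delta_1(u,t)} + \underbrace{\int_0^1 \hat{\mathrm{W}}(u,v)\bigl(A^{\mathbb{E}[\tilde{\gamma}_t^v]}-A^{\mathbb{E}[\hat{\gamma}_t^v]}\bigr)\mathrm{d}v}_{=:\Delta_2(u,t)}.
\end{align*}
Since $\rho \mapsto A^\rho$ is linear with $\|A^\rho\| \leq \|A\|\|\rho\|$, the $\Delta_2$ piece yields a standard Lipschitz bound: $\|\Delta_2(u,t)\| \leq \|A\| \int_0^1 \mathbb{E}\|\delta_t^v\|\mathrm{d}v$, which after Jensen's inequality contributes a term of the form $C\int_0^t \Phi(s) \mathrm{d}s$. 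For $\Delta_1$, observe that for every matrix entry $(i,j)$ the function $v\mapsto (A^{\mathbb{E}[\tilde{\gamma}_t^v]})_{ij}$ is scalar and bounded by $\|A\|$, so $(\Delta_1(u,t))_{ij} = \mathcal{T}_{\tilde{\mathrm{W}}-\hat{\mathrm{W}}}(f_{ij})(u)$. Using the crude pointwise bound $\|\Delta_1(u,t)\|\leq 2\|A\|$ combined with Lemma~\ref{estimatecutop}, we get
\begin{align*}
\int_I \|\Delta_1(u,t)\|^2\mathrm{d}u \le 2\|A\|\int_I\|\Delta_1(u,t)\|\mathrm{d}u \le C\|A\|^2 \|\mathcal{T}_{\tilde{\mathrm{W}}-\hat{\mathrm{W}}}\|_{\mathrm{op}} \le C\|A\|^2\|\tilde{\mathrm{W}}-\hat{\mathrm{W}}\|_{\square}.
\end{align*}

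The remaining drift terms (the free Hamiltonian commutator, the Lindblad dissipator, and the commutator with $\gamma^u$ on the graphon term after factoring out $\delta^u$) and the diffusion coefficient $\sigma^u$ are treated by standard Lipschitz estimates on $\mathcal{S}(\mathbb{H})$. One uses that every $\rho\in\mathcal{S}(\mathbb{H})$ satisfies $\|\rho\|\le 1$, so that nonlinear terms of the form $\tr((L+L^\dag)\gamma^u)\gamma^u$ are globally Lipschitz in $\gamma^u$ with constant depending on $\|L\|$. Assembling these bounds, Itô plus BDG yields
\begin{align*}
\Phi(t) \le C\|\tilde{\mathrm{W}}-\hat{\mathrm{W}}\|_{\square} + C\int_0^t \Phi(s)\mathrm{d}s,
\end{align*}
and Gronwall's lemma concludes.

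The main obstacle is handling the nonlinear diffusion term $\sqrt{\eta}\tr((L+L^\dag)\gamma^u)\gamma^u$: when subtracting the tilde and hat versions, one must carefully add and subtract cross terms to expose a Lipschitz structure, while the BDG step requires control of the $\sup_{s\le t}$-norm of a stochastic integral whose integrand itself depends on $\delta^u$. A secondary subtlety is the cut-norm extraction for $\Delta_1$: the naive operator-norm estimate only controls $L^1(I)$, so passing to the $L^2(I)$ norm that naturally arises from Itô requires the interpolation trick $\|\Delta_1\|_{L^2}^2 \le \|\Delta_1\|_{L^\infty}\|\Delta_1\|_{L^1}$ described above, which is what produces a linear rather than quadratic dependence on $\|\tilde{\mathrm{W}}-\hat{\mathrm{W}}\|_{\square}$ in the final bound.
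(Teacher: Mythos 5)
Your proposal is correct and follows essentially the same route as the paper's proof: the same splitting of the interaction difference into a graphon-discrepancy term and Lipschitz-in-$\Delta\gamma$ terms, the same $L^{\infty}$--$L^{1}$ reduction of the $L^{2}(I)$ norm so that Lemma~\ref{estimatecutop} converts the operator-norm bound into a linear dependence on $\|\tilde{\mathrm{W}}-\hat{\mathrm{W}}\|_{\square}$, and the same BDG-plus-Gr\"onwall closure. The only differences are cosmetic (a two-term rather than three-term decomposition, and keeping the supremum inside the expectation throughout, which is slightly more careful than the paper's displayed estimates).
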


\begin{proof} Set,
 $\Delta\gamma_{t}^{u} := \tilde{\gamma}_{t}^{{u}} - \hat{\gamma}_{t}^{{u}},$
     $\Delta\mathrm{W} := \tilde{\mathrm{W}} - \hat{\mathrm{W}}$ and 
$K:=L + L^{\dag}$.
Fix $u \in I $ and time $t \in [0,T]$. 
By Burkholder-Dabis-Gundy Inequality, Itô isometry and boundedness of $\tilde{\gamma},\hat{\gamma}$ 
\begin{align*}
&\mathbb{E}\big[\Big\|\tilde{\gamma_{t}}^{u} - \hat{\gamma_{t}}^{u}\Big\|^{2}\big]
\leq C\mathbb{E}\Big[\int_{0}^{t}\|\Delta\gamma_{s}^{u}\|^{2}\mathrm{d}s\Big] + C\dots\\
&\!\underbrace{\mathbb{E}\!\Bigg[\!\!\int_{0}^{t}\!\!\Big\|\!\!\int_{I}\!\big([\tilde{\mathrm{W}}(u,v)A^{\mathbb{E}[\tilde{\gamma}_{s}^{v}]},\tilde{\gamma}_{s}^{u}]\!-\![\hat{\mathrm{W}}(u,v)A^{\mathbb{E}[\hat{\gamma}_{s}^{v}]},\hat{\gamma}_{s}^{u}]\big)\mathrm{d}v\Big\|^{2}\!\!\mathrm{d}s\!\Bigg]}_{\mathcal{P}}\!\!.
\end{align*}
By straightforward triangular inequality, 
\begin{align*}
    \mathcal{P} &\leq \mathcal{P}_{1} + \mathcal{P}_{2} + \mathcal{P}_{3}\\
\mathcal{P}_{1} &= \mathbb{E}\Bigg[\int_{0}^{t}\Big\|\int_{I}[\tilde{\mathrm{W}}(u,v)A^{\mathbb{E}[\Delta\gamma_{s}]},\tilde{\gamma}^{u}_{s}] \mathrm{d}v\Big\|^{2}\mathrm{d}s\Bigg] \\ 
\mathcal{P}_{2} &= 
\mathbb{E}\Bigg[\int_{0}^{t}\Big\|\int_{I} [\Delta\mathrm{W}(u,v)A^{\mathbb{E}[\tilde{\gamma}_s^{v}]},\tilde{\gamma}^{u}_{s}] \mathrm{d}v\Big\|^{2}\mathrm{d}s\Bigg]\\
\mathcal{P}_{3} &= \mathbb{E}\Bigg[\int_{0}^{t}\Big\|\int_{I} [\tilde{W}(u,v)A^{\mathbb{E}[\tilde{\gamma}_{s}^{v}]},\Delta\gamma_{s}^{u}] \mathrm{d}v\Big\|^{2}\mathrm{d}s\Bigg].
\end{align*}

Using boundedness of density matrices,
\begin{align*}
\int_{I}\Big(\mathcal{P}_{1} + \mathcal{P}_{2}\Big)\mathrm{d}u \leq C\int_{I}\mathbb{E}\Big[\|\Delta\gamma_{s}^{u}\|^{2}\Big]\mathrm{d}u.
\end{align*}

\begin{align*}
    \int_{I}\mathcal{P}_{3}\mathrm{d}u &\leq C\int_{I}\mathbb{E}\Big[\big\|\tilde{\gamma}_{s}^{u}\big\|^{2}\Big]\Big\|\int_{I}\Delta\mathrm{W}(u,v)\big(A^{\mathbb{E}[\tilde{\gamma}_{s}^{u})]}\big)\mathrm{d}v\Big\|\mathrm{d}u\\
    &\leq C\int_{I}\Big\|\int_{I}\Delta\mathrm{W}(u,v)\big(A^{\mathbb{E}[\tilde{\gamma}_{s}^{u})]}\big)\mathrm{d}v\Big\|\mathrm{d}u\\
    &\leq C\big\|\mathcal{T}_{\tilde{\mathrm{W}} - \hat{\mathrm{W}}}\big\|_{\text{op}} \leq C\big\|\tilde{\mathrm{W}} - \hat{\mathrm{W}}\big\|_{\square},
\end{align*}
where for the last inequality we use Lemma \ref{estimatecutop}.
By combining all the estimates,
\begin{align*}
\int_{I}\mathbb{E}\Big[\big\|\Delta{\gamma}_{t}^{u}\big\|^{2}\Big]\mathrm{d}u &\!\leq \!C\Bigg(\!\!\int_{I}\int_{0}^{t}\mathbb{E}\Big[\big\|\Delta{\gamma}_{s}^{u}\big\|^{2}\Big]\mathrm{d}s\mathrm{d}u\! + \!\big\|\Delta{\mathrm{W}}\big\|_{\square} \!\!\Bigg). 
\end{align*}

\iffalse
\begin{align*}
\int_{I}\mathbb{E}\Big[\big\|\Delta{\gamma}_{t}^{u}\big\|^{2}\Big]\mathrm{d}u &\leq C\Bigg(\int_{I}\int_{0}^{t}\mathbb{E}\Big[\big\|\Delta{\gamma}_{t}^{u}\big\|^{2}\Big]\mathrm{d}s\mathrm{d}u\! + \!\big\|\tilde{\mathrm{W}} - \hat{\mathrm{W}}\big\|_{\square} \Bigg)
\end{align*}
\fi
We conclude by Grönwall's lemma.
\end{proof}

Similarly, the same theorem can be stated for the Poissonian case.
\begin{theorem}[Stability : Jump]
Let  $(\tilde{\gamma}_t)_{t \geq 0} $ and $(\hat{\gamma}_{t})_{t \geq 0}$ be the solutions of the system \eqref{graphonbelvakinjump}  associated with graphon $\tilde{\mathrm{W}}$ and $\hat{\mathrm{W}}$, with the same initial conditions, then there exists  some constant $C := C(T,\eta, \|\tilde{H}\|,\|A\|,\|L\|)$ such that 
$$ \mathbb{E}\Big[\int_{I}\sup_{t \in [0,T]}\Big\|\tilde{\gamma}_{t}^{u} -\hat{\gamma}_{t}^{u}\Big\|^{2}\mathrm{d}u\Big] \leq C\|\tilde{\mathrm{W}} - \hat{\mathrm{W}}\|_{\square}.$$
\end{theorem}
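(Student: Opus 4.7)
The plan is to mirror the diffusive stability proof with the Brownian stochastic integral replaced by its compound-Poisson counterpart. The crucial simplification inherited from the assumption that $L$ is unitary is that $N^u$ has constant intensity one, so $M_t^u := N_t^u - t$ is a true martingale whose compensator is deterministic. Rewriting \eqref{graphonbelvakinjump} using $\mathrm{d}N_t^u = \mathrm{d}M_t^u + \mathrm{d}t$ splits the jump term into a pure-jump martingale and a Lindblad-type drift $(L\gamma_{t-}^u L^\dag - \gamma_{t-}^u)\mathrm{d}t$.

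Set $\Delta\gamma_t^u := \tilde\gamma_t^u - \hat\gamma_t^u$ and $\Delta\mathrm{W} := \tilde{\mathrm{W}} - \hat{\mathrm{W}}$. By Fubini I would reduce the claimed bound to an estimate of $\mathbb{E}[\sup_{t \in [0,T]}\|\Delta\gamma_t^u\|^2]$ pointwise in $u$, then integrate over $u$. For the martingale contribution, Burkholder--Davis--Gundy for purely discontinuous martingales yields
\begin{align*}
&\mathbb{E}\Big[\sup_{r \leq t}\Big\|\int_0^r (L\Delta\gamma_{s-}^u L^\dag - \Delta\gamma_{s-}^u)\,\mathrm{d}M_s^u\Big\|^2\Big]\\
&\qquad\leq C\,\mathbb{E}\Big[\int_0^t \|\Delta\gamma_s^u\|^2\,\mathrm{d}s\Big],
\end{align*}
using $\|L X L^\dag - X\| \leq 2\|X\|$, which is valid since $L$ is unitary. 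The Lindblad drift $(L\Delta\gamma_{s-}^u L^\dag - \Delta\gamma_{s-}^u)\mathrm{d}s$ yields a contribution of the same form by Cauchy--Schwarz.

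For the Hamiltonian/graphon commutator drift, the decomposition of the diffusive proof applies verbatim because the graphon interaction enters identically in both cases. Writing the difference of commutators as $\mathcal{P}_1 + \mathcal{P}_2 + \mathcal{P}_3$ and integrating over $u$, the $\mathcal{P}_1$ and $\mathcal{P}_3$ terms are dominated by $\int_I \mathbb{E}[\|\Delta\gamma_s^u\|^2]\mathrm{d}u$, while the piece $\mathcal{P}_2$ carrying the graphon difference is controlled by
$$C\,\|\mathcal{T}_{\Delta\mathrm{W}}\|_{\mathrm{op}} \leq C\,\|\Delta\mathrm{W}\|_\square$$
via Lemma \ref{estimatecutop}, using $\|A^{\mathbb{E}[\tilde\gamma_s^v]}\| \leq \|A\|$ and the boundedness of density matrices. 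Assembling the estimates gives
$$\int_I \mathbb{E}\Big[\sup_{r \leq t}\|\Delta\gamma_r^u\|^2\Big]\mathrm{d}u \leq C\int_0^t \int_I \mathbb{E}\Big[\sup_{r \leq s}\|\Delta\gamma_r^u\|^2\Big]\mathrm{d}u\,\mathrm{d}s + C\,\|\Delta\mathrm{W}\|_\square,$$
and Grönwall's lemma concludes.

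I expect the main obstacle to be the BDG step for the jump integral: one must verify that $s \mapsto L\Delta\gamma_{s-}^u L^\dag - \Delta\gamma_{s-}^u$ is predictable and identify its predictable quadratic variation as $\int_0^\cdot \|L\Delta\gamma_{s-}^u L^\dag - \Delta\gamma_{s-}^u\|^2\mathrm{d}s$, which works precisely because the compensator of $N^u$ is deterministic. Without unitarity of $L$ the compensator would couple back to $\gamma^u$ and the linear Grönwall scheme above would collapse, consistent with the authors' restriction to unitary $L$ in \eqref{graphonbelvakinjump}.
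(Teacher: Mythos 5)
Your proposal is correct and follows exactly the route the paper intends: the paper gives no separate argument for the jump case beyond ``similarly to the diffusive case,'' and you mirror that diffusive proof, supplying the jump-specific details (compensating $N^u_t$ into $M^u_t=N^u_t-t$ thanks to unitarity of $L$, BDG for the purely discontinuous martingale with $\|L X L^{\dag}-X\|\leq 2\|X\|$, the same $\mathcal{P}_1,\mathcal{P}_2,\mathcal{P}_3$ commutator decomposition with the cut-norm Lemma \ref{estimatecutop}, and Grönwall). You even place the cut-norm bound on the term carrying $\Delta\mathrm{W}$, which is the intended reading of the diffusive proof despite its $\mathcal{P}_2/\mathcal{P}_3$ labeling slip.
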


\section{Numerical simulation }
\label{sec:numerical}
To highlight  the main nature of  this graphon quantum systems, we consider a toy model of graphon qubit  systems, where $\tilde{H} = \sigma_{z}, \hat{H} = \sigma_{x}, L = \sigma_{z}$, and $A$ denotes the exchange  the
interaction operator between qubits given by  $A = a^{\dag}_{1}\otimes a_2 + a^{\dag}_{2}\otimes a_{1}$. This operator represents the exchange
of a single photon between two qubits, where $a^{\dag}$ and $a$ are the creation and annihilation operators, see \cite{chalal23mean} for more details. The graphon function $\mathrm{W}$ is given as
\begin{align*}
    \mathrm{W}(u,v) &= 1 \; \text{if } (u-\frac{1}{2})(\frac{1}{2} - v) \geq 0,\\
    &= 0 \; \text{otherwise}.
\end{align*}

The graphon systems is reduced to two classes of particles, denoted by $0$ and $1$, whose dynamics is given by the following stochastic differential equations: for $j=0,1$,
\begin{align*}
    \mathrm{d}\gamma_t^{j} &= -\iu[\sigma_{z} + \uc_{j}(\gamma^{j})\sigma_x + \frac{1}{2}A^{\mathbb{E}[\gamma_t^{1-j}]},\gamma_t^{j}]\mathrm{d}t + \\
    &\big(\sigma_z\gamma_t^{j}\sigma_{z} - \gamma_{t}^{j}\big)\mathrm{d}t + (\sigma_{z}\gamma_{t}^{j} + \gamma_{t}^{j}\sigma_{z} - 2\tr(\sigma_{z}\gamma_{t}^{j})\gamma_{t}^{j})\mathrm{d}W_t^{j},
\end{align*}
\iffalse
\begin{align*}
    \mathrm{d}\gamma_t^{a} &= -\iu[\sigma_{z} + \frac{1}{2}A^{\mathbb{E}[\gamma_t^{b}]},\gamma_t^{a}]\mathrm{d}t + \big(\sigma_z\gamma_t^{a}\sigma_{z} - \gamma_{t}^{a}\big)\mathrm{d}t + (\sigma_{z}\gamma_{t}^{a} + \gamma_{t}^{a}\sigma_{z} - 2tr(\sigma_{z}\gamma_{t}^{a})\gamma_{t}^{a})\mathrm{d}W_t^{a}\\
    \mathrm{d}\gamma_t^{b} &= -\iu[\sigma_{z} + \frac{1}{2}A^{\mathbb{E}[\gamma_t^{a}]},\gamma_t^{b}]\mathrm{d}t + \big(\sigma_z\gamma_t^{b}\sigma_{z} - \gamma_{t}^{b}\big)\mathrm{d}t + (\sigma_{z}\gamma_{t}^{b} + \gamma_{t}^{b}\sigma_{z} - 2tr(\sigma_{z}\gamma_{t}^{b})\gamma_{t}^{b})\mathrm{d}W_t^{b}
\end{align*}
\fi
where $\uc_{j}(\gamma^{j}) = -8\iu\tr\Big([\sigma_{x},\gamma^{j}]\tau^{j}\Big) \!+\! 5\big(1 - \tr(\gamma^{j}\tau^{j})\big)$ and 
$$\gamma^{j}_t \!\!:=\!\!\frac{1}{2}\begin{pmatrix}
    1+z^j_t & x^j_t -\iu y^j_t \\ x^j_t +\iu y^j_t & 1-z^j_t 
\end{pmatrix}, \tau^{0} \!\!:=\!\! \begin{pmatrix}
    1 & 0 \\ 0 & 0 
\end{pmatrix}, \tau^{1}\!\!:= \!\!\begin{pmatrix}
    0 & 0 \\ 0 & 1 
\end{pmatrix}.$$

Straightforward computation in Pauli basis yields 
{\small\begin{align*}
\small\mathrm{d}x_t^{j}\!&=\! \small \Big( - y_t^{j} - x_t^{j} + z_t^{j}\mathbb{E}[y_t^{1-j}]\Big)\mathrm{d}t - x_t^{j}z_t^{j}\mathrm{d}W_t^{j},\\
\small\mathrm{d}y_t^{j}\!&=\! \small\Big( x_t^{j} - {y_t}^{j} + \uc_j(\gamma_t^{j})z_t^{j} -z_t^{j}\mathbb{E}[x_t^{1-j}]\Big)\mathrm{d}t + y_t^{j}z_t^{j}\mathrm{d}W_t^{j},\\
\small\mathrm{d}z_t^{j}\!&=\! \small\Big(\!-\!\uc_j(\gamma_t)x_t^{j} + y_t^{j}\mathbb{E}[x_t^{1-j}] \!+\! x_t^{j}\mathbb{E}[y_t^{1-j}]\Big)\mathrm{d}t + \big(1 \!-\! z_t^2)\mathrm{d}W_t^{j}. 
\end{align*}}

\begin{figure}[h!]
     \centering
  \includegraphics[width=\linewidth]{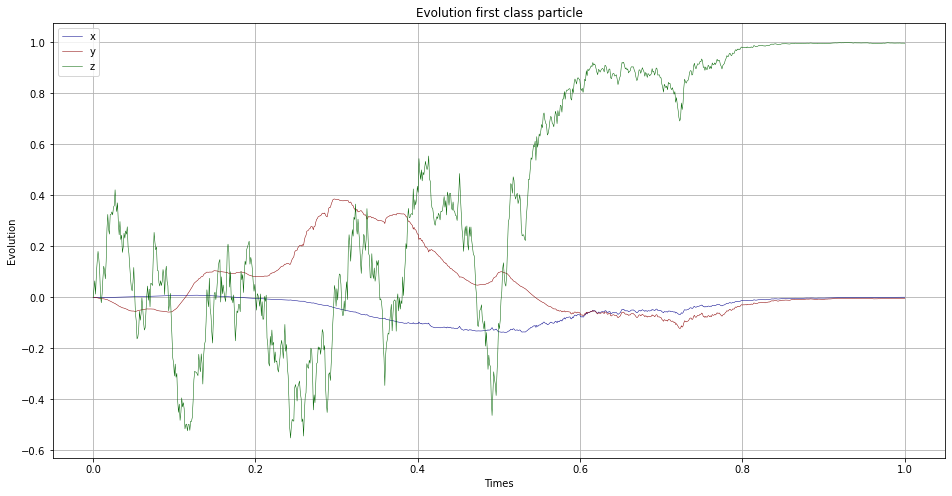}
  \caption{\footnotesize{Evolution of $x^0_t$(\textcolor{blue}{b}),$y^0_t$(\textcolor{red}{r}),$z^0_t$(\textcolor{green}{g}) component for the first class}}
    \label{fig:1}
  \end{figure}
%\end{minipage}
%\centering
%\begin{minipage}{.5\textwidth}

  \begin{figure}[h!]
     \centering
  \includegraphics[width=\linewidth]{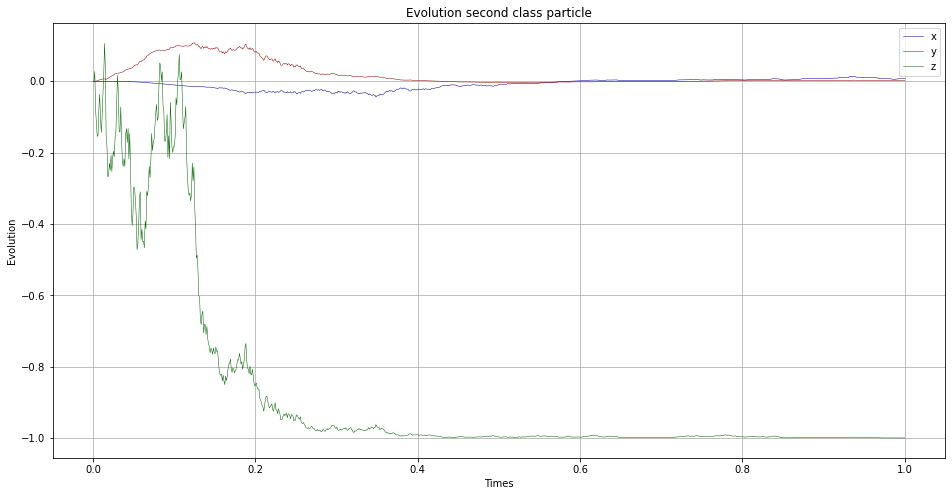}
  \caption{\footnotesize{Evolution of $x^1_t$(\textcolor{blue}{b}),$y^1_t$(\textcolor{red}{r}),$z^1_t$(\textcolor{green}{g}) component for the second class}}
    \label{fig:2}
  \end{figure}
As shown in Figures \ref{fig:1} and \ref{fig:2}, the feedback $\uc_j(\gamma^j)$ stabilizes $\gamma^j$ toward the target state $\tau^j.$

\section{CONCLUSIONS}
\label{sec:conc}
In this paper, we have derived a novel system of Belavkin equations for observed quantum particles with a graphon interaction. We studied several characteristics of the limit system. This new framework, coupled with optimal control problems, allows the consideration of quantum graphon stochastic games. In future work, we will provide more details on the results of this paper and in particular demonstrate the convergence of the density matrix towards the graphon system.  
\addtolength{\textheight}{-12cm}   % This command serves to balance the column lengths
                                  % on the last page of the document manually. It shortens
                                  % the textheight of the last page by a suitable amount.
                                  % This command does not take effect until the next page
                                  % so it should come on the page before the last. Make
                                  % sure that you do not shorten the textheight too much.

%%%%%%%%%%%%%%%%%%%%%%%%%%%%%%%%%%%%%%%%%%%%%%%%%%%%%%%%%%%%%%%%%%%%%%%%%%%%%%%%

%%%%%%%%%%%%%%%%%%%%%%%%%%%%%%%%%%%%%%%%%%%%%%%%%%%%%%%%%%%%%%%%%%%%%%%%%%%%%%%%

%%%%%%%%%%%%%%%%%%%%%%%%%%%%%%%%%%%%%%%%%%%%%%%%%%%%%%%%%%%%%%%%%%%%%%%%%%%%%%%%

\bibliographystyle{unsrt}
\bibliography{Bibliography}

\end{document}